\documentclass[11pt]{amsart}
\usepackage[latin1]{inputenc}
\usepackage{amssymb}
\usepackage{pdfsync}
\usepackage[english]{babel}

\usepackage[pdftex, colorlinks=true,urlcolor=blue,linkcolor=blue,citecolor=blue]{hyperref}
\usepackage{graphicx}
\graphicspath{ {./images/} }
\usepackage[capitalize]{cleveref}
\usepackage{fullpage}
\usepackage{color}
\DeclareMathOperator{\Ima}{Im}
\usepackage{amsmath}
\usepackage{amsfonts}
\usepackage{mathrsfs}
\usepackage{t1enc , graphicx}
\usepackage{verbatim}
\usepackage{bbm}
\usepackage{enumitem}

\usepackage[left= 1.3 in, right= 1.3 in ,top= 1 in, bottom = 2.1 in]{geometry}

\newcommand{\Z}{\mathbb{Z}}
\newcommand{\N}{\mathbb{N}}
\newcommand{\F}{\mathbb{F}}

\newcommand{\ifff}{if and only if }

\newcommand\Fq{{\mathbb{F}_q}}

\newtheorem{theorem}{Theorem}
\newtheorem*{theorem*}{Theorem}
\newtheorem{proposition}[theorem]{Proposition}
\newtheorem{lemma}[theorem]{Lemma}

\newtheorem*{corollary*}{Corollary}

\theoremstyle{definition}
\newtheorem*{definition*}{Definition}
\newtheorem{definition}[theorem]{Definition}

\theoremstyle{remark}

\newtheorem*{remark*}{Remark}

\newcommand{\vertiii}[1]{{\left\vert\kern-0.25ex\left\vert\kern-0.25ex\left\vert #1 
		\right\vert\kern-0.25ex\right\vert\kern-0.25ex\right\vert}}

\begin{document}
    \author{Pierre-Yves Bienvenu}
    \address{Johann Radon Institute for Computational and Applied Mathematics, Austrian 
Academy of Sciences, Linz, Austria}
\email{pierre.bienvenu@oeaw.ac.at}

		\author{Th\'ai Ho\`ang L\^e}
	
		\address{Department of Mathematics\\
				University of Mississippi\\
				University, MS 38677, USA}
    \email{leth@olemiss.edu} 
		
		        \author{Gauree Wathodkar}
				\address{Department of Mathematics and Statistics\\
				Loyola University Chicago\\
				1032 W. Sheridan Road, Chicago, IL 60660, USA}
		\email{gwathodkar@luc.edu}

\title{A Generalization of S\'ark\"ozy's theorem in function fields}
\begin{abstract}
S\'ark\"ozy's theorem says that if $A \subset \mathbb{Z}$ has positive upper asymptotic density, then there are distinct $a_1, a_2 \in A$ and $n \in \Z$ such that $a_1-a_2 = n^2$. The same is true if $n^2$ is replaced by $F(n)$ for any polynomial $F \in \Z[x]$ with constant term zero. Green proved an $\mathbb{F}_q[t]$-analog of S\'ark\"ozy's theorem with strong quantitative bounds, but required a technical condition on the number of roots of the polynomial $F \in \Fq[x]$. This condition was recently removed by Li and Sauermann. In this paper, we generalize Green's argument to accommodate equations in more variables in $\Fq[t]$, while pointing out that the technical condition can be removed by means of a simple observation.
\end{abstract}	
\maketitle
\section{Introduction}
The \textit{upper asymptotic density} of a set $A \subset \mathbb{Z}$ is given by 
\begin{equation*}
    \overline{d}(A)=\limsup_{N\rightarrow \infty} \frac{|A\cap \{1,\dots,N\}|}{N}.
\end{equation*}
S\'ark\"ozy \cite{sar} proved that if $\overline{d}(A) > 0$, then there are distinct $a_1, a_2 \in A$ and $n \in \Z$ such that $a_1-a_2 =n^2$. The same conclusion holds when $n^2$ is replaced by $F(n)$, where $F \in \Z[x]$ is a polynomial with constant term zero. More generally, the conclusion holds if and only if $F$ is an \textit{intersective polynomial}, that is, for any $d \in \Z^+$, there exists $n \in \Z$ such that $d| F(n)$. This condition is easily seen to be necessary, by taking $A = d\Z$. Sufficiency follows from work of Kamae and Mend\`es France \cite{km}. We refer the reader to the survey \cite{le-survey} for a discussion on sets enjoying the same property as $\{ n^2 \}$, known as \textit{intersective sets}. 

The finitary version of (the generalization of) S\'ark\"ozy's theorem is as follows:

\begin{theorem}[Generalized S\'ark\"ozy] \label{sarkfi}
    Let $F \in \Z[x]$ be intersective. For $N\in \N$, let $\alpha_F(N)$ be the maximum size of a subset $A \subset \{1,\dots,N\}$ that do not contain distinct elements $a_1, a_2$ such that $a_1 - a_2 = F(n)$ for some $n \in \Z$. Then $\alpha_F(N) = o(N)$ as $N\rightarrow \infty$.     
\end{theorem}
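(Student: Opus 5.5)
The plan is to prove Theorem~\ref{sarkfi} by the Fourier-analytic density increment method of Roth and S\'ark\"ozy, adapted to intersective polynomials in the manner of Lucier and of Rice. Suppose $A \subset \{1,\dots,N\}$ has density $\delta$ and contains no pair $a_1 \neq a_2$ with $a_1 - a_2 = F(n)$. The goal is to find a long arithmetic progression $P = a + d\{1,\dots,N'\}$, with $N'$ a positive power of $N$ (depending on $\delta$), on which $A$ has density at least $\delta + c(\delta)$. We then pass to the rescaled set $A' = \{m : a + dm \in A\}$. Because $d$ may not divide the values of $F$, the polynomial must be replaced as well. Using intersectivity, for each $d$ we choose a root $r_d$ of $F$ modulo $d$, compatible along divisibility chains via a $p$-adic root and the Chinese remainder theorem, and form the auxiliary polynomial $F_d(x) = F(r_d + dx)/d$. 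This is again integer-valued and intersective, with the same degree and with leading coefficient controlled by $d$. The set $A'$ then avoids differences $F_d(n)$. Iterating, the density grows by $c(\delta)$ at each step, so after $O_\delta(1)$ steps it would exceed $1$, a contradiction, provided $N$ is large enough that the progression lengths survive all the steps. This yields $\alpha_F(N) = o(N)$.

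For the increment step, we embed into $\Z/N''\Z$ with $N''$ a bit larger than $N$ to avoid wraparound. We count solutions of $a_1 - a_2 = F_d(n)$ with $a_1, a_2 \in A$ and $n \le M$, where $M \approx N^{1/k}$ and $k = \deg F$, so that the values $F_d(n)$ lie in $[1, N/10]$. We decompose $1_A = \delta 1_{[N]} + f$. The main term from the $\delta 1_{[N]}$ part is $\asymp \delta^2 N M$, which must be strictly more than the trivial diagonal contributions. Hence the count involving the balanced function $f$ must be large, and by Plancherel and Cauchy--Schwarz this forces the existence of a frequency $\theta$ satisfying
\[
\Bigl|\sum_{n\le M} e(\theta F_d(n))\Bigr|\cdot|\widehat{f}(\theta)| \gg \delta^2 N M ,
\]
up to an $L^2$ loss handled by Plancherel.

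The main obstacle is the Weyl-sum step. We must show that a large exponential sum $\sum_{n\le M} e(\theta F_d(n))$ forces $\theta$ to lie close to a rational $a/q$ with small $q$. For this we would invoke Weyl's inequality for minor arcs together with the standard major-arc approximation. The uniformity in $d$ is the delicate part, since the leading coefficient of $F_d$ grows with $d$. Once $\theta$ is known to be major arc, $|\theta - a/q| \le Q/N$ with $q \le Q = \delta^{-O(1)}$, we partition $\{1,\dots,N\}$ into progressions of common difference $q$ and length about $N/Q^2$. On each of these $e(\theta x)$ is nearly constant, so large $|\widehat f(\theta)|$ yields a progression on which $f$ has mean $\gg \delta^{O(1)}$; this is the increment. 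Tracking the bounds, the number of iterations is $\delta^{-O(1)}$, and each one shrinks $N$ to $N^{c}$. This gives a bound like $\alpha_F(N) \ll N/(\log\log N)^{c}$, which is enough to conclude $o(N)$.
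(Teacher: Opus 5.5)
The paper gives no proof of this statement: it is quoted as background, credited to Kamae--Mend\`es France, with bounds from Adajar et al. So your proposal has to stand on its own. Its skeleton is the Lucier--Rice density increment, and that part is fine: compatible roots from $p$-adic roots and CRT, $F_d$ integer-valued and intersective, $A'$ avoiding $F_d$, and $(F_{d_1})_{d_2}=F_{d_1d_2}$ for compatible roots.

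\textbf{The gap.} It sits exactly at the step you call the main obstacle. You assert that large Weyl sums force $\theta$ to within $Q/N$ of some $a/q$ with $q\le Q=\delta^{-O(1)}$, independently of $d$. With your normalization $F_d(x)=F(r_d+dx)/d$ this is false. Take $F(x)=x^k$ with $k\ge 2$ and $r_d=0$, so $F_d(x)=d^{k-1}x^k$. At every $\theta=b/d^{k-1}$ all terms of $\sum_{n\le M}e(\theta F_d(n))$ equal $1$. Yet for $d^{k-1}>Q$ the point $1/d^{k-1}$ is at distance at least $1/(Qd^{k-1})$ from every $a/q$ with $q\le Q$, and this is far more than $Q/N\approx Q/(d^{k-1}M^k)$.

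Consequently the set of $\theta$ with large Weyl sum has measure $\gg d^{k-1}/N$, not $\delta^{-O(1)}/N$. Your Plancherel/Cauchy--Schwarz step then only gives $|\widehat{f}(\theta)|\gg\delta^{O(1)}d^{-(k-1)/2}N$, which is an increment of size $\delta^{O(1)}d^{-O(1)}$. At every step the cumulative modulus is multiplied by the new denominator, which can be of size $d^{k-1}$. Nothing then prevents the increments from being summable. So ``the density grows by $c(\delta)$ at each step'' is not established, and the iteration need not terminate.

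\textbf{First repair: Lucier's normalization.}
\begin{enumerate}
\item For each $p$, fix a $p$-adic root $z_p$ of multiplicity $m_p$.
\item Let $\lambda$ be completely multiplicative with $\lambda(p)=p^{m_p}$, and put $F_d(x)=F(r_d+dx)/\lambda(d)$, which lies in $\Z[x]$.
\item Pass to progressions of difference $\lambda(q)$. This turns $F_d$ into $F_{dq}$; for $F=x^k$ it gives $F_d=x^k$ for every $d$.
\end{enumerate}
The point of Lucier's work is that the complete sums $q^{-1}\sum_{s \bmod q}e(bF_d(s)/q)$, with $(b,q)=1$, decay in $q$ uniformly in $d$. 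That yields major arcs with $q\le\delta^{-O(1)}$ independent of $d$, and then your single-frequency step goes through. Your normalization coincides with his when the chosen roots are simple; multiple roots are what break it.

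\textbf{Second repair: an $L^2$ increment.} Keep your $F_d$, but let $\mathfrak M=\{\theta:|S(\theta)|\ge c\delta M\}$. The same Cauchy--Schwarz then gives $\int_{\mathfrak M}|\widehat{f}|^2\gg\delta^{O(1)}N$ uniformly in $d$. Convolve $f$ with the normalized indicator of $\{L,2L,\dots,JL\}$, where $L$ is the lcm of the major-arc denominators supplied by Weyl and $J\approx\delta^{O(1)}N/L$. This produces a progression of difference $L$ on which $A$ has density at least $\delta+\delta^{O(1)}$. Only the progression length depends on $d$. That is harmless for $o(N)$, since $d$ stays bounded in terms of $\delta$ and $F$ over the $\delta^{-O(1)}$ steps.

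Separately, the statement tacitly needs $F\not\equiv 0$: the zero polynomial is intersective under the given definition, but the conclusion fails for it. Your assumption $k=\deg F\ge 1$ is the right reading.
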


To date, the best bound is $\alpha_F(N) \leq C N e^{- c (\log N)^d } $, for any $0 < d < 1/4$, where $C$ and $c$ are positive constants depending on $F$ and $d$. It is due to Adajar et. al. \cite{adajar}, building on the breakthrough of Green and Sawhney \cite{green-sawhney} for the case $F(n)=n^2$. 

Let $\Fq$ be a finite field with $q$ elements, where $q$ is a power of a prime $p$. Let $\Fq[t]$ be the ring of polynomials with coefficients in $\Fq$ in an indeterminate $t$. Also, let $G_n \subset \Fq[t]$ be the set of all polynomials in $\Fq[t]$ with degree less than $n$. In \cite{le}, Liu and the second author of the present paper studied the analogue of 
S\'ark\"ozy's theorem in $\Fq[t]$ and proved the following, using the circle method.

\begin{theorem}[L\^e-Liu] \label{leliu}
     Let $A\subset G_n$ with $|A|\gg q^n (\log n)^7/n$. Then, there are distinct polynomials $a_1(t),a_2(t) \in A$ such that $a_1(t)-a_2(t)=(b(t))^2$ for some $b(t)\in\mathbb{F}_q[t]$.
\end{theorem}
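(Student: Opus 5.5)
We prove the theorem by the density increment method of Roth and Sárközy, with exponential sums over $\Fq[t]$ and the circle method on the torus $\mathbb{T}=\Fq((1/t))/\Fq[t]$. We assume $q$ odd, so that squares behave well; in characteristic $2$ every square is a $2$-nd power of Frobenius type, and that case needs separate care.

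Write $\delta=|A|/q^n$ and let $e(\cdot)$ be the standard additive character on $\Fq((1/t))$. Count
\[
\Lambda=\sum_{a\in G_n}\sum_{b\in G_{n/2}}1_A(a)1_A(a+b^2).
\]
By orthogonality, $\Lambda=\int_{\mathbb{T}}|\widehat{1_A}(\alpha)|^2S(-\alpha)\,d\alpha$, where $S(\alpha)=\sum_{b\in G_{n/2}}e(\alpha b^2)$. The trivial solutions $b=0$ contribute only $|A|$. We therefore need to show that either $\Lambda$ exceeds $|A|$, or $A$ has increased density $\delta(1+c\delta)$ on some structured subset.

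\emph{Step 1 (arcs).} Split $\mathbb{T}$ into major arcs, namely $\alpha$ within $q^{-n+h}$ of $a/g$ with $\deg g\le h$, and minor arcs.

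\emph{Step 2 (minor arcs).} On the minor arcs, Weyl differencing for the quadratic sum gives $|S(\alpha)|\ll q^{n/2}q^{-h/2}$. This is clean in $\Fq[t]$, since a single differencing yields a linear sum, which is either $0$ or of full size. Combined with Parseval, the minor arc contribution is at most $q^{n/2}q^{-h/2}\delta q^n$.

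\emph{Step 3 (major arcs).} On the major arcs, $S(\alpha)$ is approximated by $q^{n/2}\,G(a,g)/|g|$ times a localized integral, where $G(a,g)$ is a quadratic Gauss sum with $|G|\le |g|^{1/2}$. If $\Lambda$ is close to $|A|$, then $\widehat{1_A}$ must carry mass $\gg\delta^2q^{n}$ on the major arcs away from $0$. The standard dichotomy then gives, by pigeonhole over $g$ with $\deg g\le h$, a single rational $a/g$ near which $\widehat{1_A}$ is large. From this we obtain a density increment $\delta\to\delta(1+c\delta)$ on an arithmetic progression $x_0+g^2\cdot G_{n'}$ with $n'=n-O(h)$. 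Using modulus $g^2$ keeps squares invariant: $(x_0+g^2y)-(x_0+g^2y')=g^2(y-y')$, and $y-y'=c^2$ gives the difference $(gc)^2$. So the problem for $A$ rescales to the same problem in $G_{n'}$.

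\emph{Step 4 (iteration).} Iterate. After $O(1/\delta)$ steps the density would exceed $1$, so the process must stop while $n'$ remains positive. Choosing $h\approx C\log(1/\delta)$ at each step gives total degree loss $O(\delta^{-1}\log(1/\delta))$, which is at most $n$ when $\delta\gg(\log n)/n$. This yields the stated bound, and the $(\log n)^7$ factor absorbs the losses.

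\textbf{Main obstacle.} The main obstacle is Step 3. One must show that the major arcs with large $|g|$ are controlled by the Gauss sum decay $|g|^{-1/2}$. One must also run the $L^2$ mass increment argument of Pintz–Steiger–Szemerédi type efficiently, so that each increment is $\delta(1+c\delta)$ rather than $\delta+c\delta^2$ with a worse loss of degree. To handle the summation over moduli, I would first try the Ruzsa–Sanders style "many moduli at once" argument: bound $\sum_{\deg g\le h}\sum_a|\widehat{1_A}|^2$ near $a/g$ using the large sieve in $\Fq[t]$.
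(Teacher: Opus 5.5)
The paper gives no proof of this statement. It quotes it from L\^e--Liu, whose argument combines the circle method with a Pintz--Steiger--Szemer\'edi density increment, so your overall architecture is the right one. The gap is in Step~3, which carries the whole argument. The increment $\delta\mapsto\delta(1+c\delta)$ at degree cost $O(h)$ is asserted rather than derived, and the mechanism you describe does not produce it. Suppose you pigeonhole the major-arc mass down to a \emph{single rational} $a/g$ and then use a large Fourier coefficient there. You lose a factor comparable to the number of such rationals, which is a power of $1/\delta$ since $q^{h}\approx\delta^{-2}$. You end up with an increment $\delta\mapsto\delta+c\delta^{C}$ with $C$ well above $2$, and iterating that only gives $\delta\ll n^{-c}$ for some $c<1$. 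Your ``main obstacle'' paragraph concedes that this step is open, and it is also confused. The quantities $\delta(1+c\delta)$ and $\delta+c\delta^2$ are identical. The large sieve bounds major-arc mass from above, whereas you need to extract structure from a lower bound. Separately, you leave characteristic $2$ undone. It is easy but must be stated: the squares in $G_n$ form the $\Fq$-subspace of polynomials in $t^2$, $A$ meets each coset of it at most once, and so $|A|\le q^{\lfloor n/2\rfloor}$.

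The missing idea is to stay in $L^2$ and pigeonhole only over moduli, never over numerators. This uses the fact that in $\Fq[t]$ all arcs of one modulus are captured by a single subgroup. Put $f=1_A-\delta 1_{G_n}$ and $m=\lfloor n/2\rfloor$. The cross terms vanish exactly because $x\mapsto x+b^2$ permutes $G_n$, so $|\Lambda(f)|\ge\frac12\delta^2q^{n+m}$. Your minor-arc bound, together with $|S(a/g+\beta)|\le q^{m}|g|^{-1/2}$ on the major arcs (exact Gauss sums, $q$ odd), then gives
\[
\sum_{\deg g\le h}|g|^{-1/2}E(g)\gg\delta^2q^n,\qquad E(g)=\sum_{(a,g)=1}\int_{|\beta|<q^{-n+h}}|\widehat f(a/g+\beta)|^2\,d\beta .
\]
Pigeonholing over $j=\deg g$, and then over the $q^j$ monic $g$ of degree $j$, yields one $g$ with $E(g)\gg q^{-j/2}\delta^2q^n/h$. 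Every arc $a/g+\beta$ lies in the annihilator of $H=g\cdot G_{n-h-j-1}$. Averaging over cosets $C$ of $H$ therefore gives $\mathbb{E}_C(\delta_C-\delta)^2\ge E(g)/q^n$. Since $\mathbb{E}_C\delta_C^2\le\delta\max_C\delta_C$, you get $\max_C\delta_C\ge\delta\bigl(1+c\,q^{-j/2}/h\bigr)\ge\delta\bigl(1+c\,\delta/\log(1/\delta)\bigr)$. Passing to a coset of $g^2G_{n-h-2j-1}$ costs $O(h)$ degrees and preserves the property of having no square differences. The iteration then loses $O(\delta^{-1}\log^2(1/\delta))$ degrees in total, which is admissible under the hypothesis $\delta\gg(\log n)^7/n$.
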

 
Using the polynomial method developed by Croot-Lev-Pach \cite{clp}, Green \cite{green} gave a much stronger bound than Theorem \ref{leliu}.
\begin{theorem}[Green] \label{gr}
	Let $F(x) = \alpha_1 x + \alpha_2 x^2 + \cdots + \alpha_\ell\ x^\ell \in\Fq[x]$ be a polynomial of degree $\ell$ with constant term zero such that the number of roots of $F$ in $\Fq[t]$ is coprime to $q$. Then there is a constant $c=c(q,\ell) \in (0,1)$ such that if $A\subset G_n$ has cardinality greater than $2q^{cn}$, then there are distinct polynomials $a_1(t), a_2(t)\in A$ such that $a_1(t)-a_2(t) = F(b(t))$ for some $b(t)\in \Fq[t]$.
\end{theorem}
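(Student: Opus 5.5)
Following Green, I will pass to a finite vector space and use the Croot--Lev--Pach slice-rank (polynomial) method. Identify $G_n$ with $\Fq^n$ via coefficient vectors. Pick $m$ with $\ell(m-1) < n$, for instance $m \approx n/\ell$, so that $F(b)\in G_n$ whenever $\deg b<m$. Write $b = \sum_{j<m} x_j t^j$ with $x=(x_1,\dots,x_m)\in\Fq^m$. Then each coefficient of $F(b(t))$ is a polynomial of degree at most $\ell$ in $x$. This gives a polynomial map $\Phi:\Fq^m\to\Fq^n$ with $\Phi(0)=0$, each coordinate of degree $\le \ell$.

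Let $A\subset G_n$ contain no nontrivial difference $a_1-a_2\in\Phi(\Fq^m)$. The key device is a polynomial $P:\Fq^n\to\Fq$ of low degree that is nonzero at $0$ and vanishes on $\Phi(\Fq^m)\setminus\{0\}$. I will take
\[ P(y) = \sum_{x\in\Fq^m} \prod_{i=1}^n \bigl(1-(y_i-\Phi_i(x))^{q-1}\bigr), \]
which counts, in $\Fq$, the number of $x$ with $\Phi(x)=y$. This is where the root-count hypothesis enters. $P(0)$ is the number of $x$ with $F(b)=0$, i.e. the number of roots of $F$ of degree $<m$ in $\Fq[t]$. That number is the number of roots in $\Fq[t]$ once $m$ exceeds their degrees; since $F$ has finitely many roots, all of degree $0$ because $\deg F(b)=\ell\deg b$, they lie in $\Fq$. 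It is coprime to $p$, so $P(0)\neq0$.

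The degree of $P$ in $y$ looks like $n(q-1)$, which is useless, so I must reduce it. Summing over $x\in\Fq^m$ kills every monomial in $x$ whose exponent in some $x_j$ is not a positive multiple of $q-1$. Expanding the product, a term of $y$-degree $d$ carries $x$-degree at most $\ell(n(q-1)-d)$. Surviving terms need $x$-degree at least $m(q-1)$, forcing $d\le n(q-1)-m(q-1)/\ell$. With $m\approx n/\ell$, this gives $\deg P\le (q-1)n(1-1/\ell^2)$ roughly, a fixed proportion below $(q-1)n$.

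Next comes the standard CLP step. Consider the matrix $M(a,a')=P(a-a')$ for $a,a'\in A$. By hypothesis it is diagonal with nonzero diagonal, so its rank is $|A|$. Splitting the monomials of $P(a-a')$ by whether the degree in $a$ or in $a'$ is at most $\deg P/2$ bounds the rank by $2\#\{\text{monomials with each exponent }<q \text{ and total degree}\le \deg P/2\}$. Since $\deg P/2\le(1-\delta)(q-1)n/2$, a standard large-deviation estimate shows this count is at most $q^{cn}$ with $c=c(q,\ell)<1$. Hence $|A|\le 2q^{cn}$.

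The main obstacle is the degree reduction: making the exponent bookkeeping precise, and choosing $m$ so that the root count $P(0)$ is nonzero while the saving stays linear in $n$. The simple observation for removing the coprimality hypothesis would be to replace $F$ by $F(b)$ restricted to a subset of $b$, or to compose with a suitable power, making the fibre size at $0$ equal to $1$.
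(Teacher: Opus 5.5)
Your proof is correct and follows the paper's route specialized to $k=2$. It is Proposition~\ref{thmB} with $\Psi(a_1,a_2)=a_1-a_2$ (so $d'=1$, $n'=n$, $d''=\ell$, and your choice $m=\lceil n/\ell\rceil$); for $k=2$ slice rank is just matrix rank, and your monomial-splitting bound is the pigeonhole step. The one thing you add is an inline proof of Lemma~\ref{greenlemma}, using the fibre-counting polynomial $P(y)=\sum_{x}\prod_i\bigl(1-(y_i-\Phi_i(x))^{q-1}\bigr)$, which the paper only cites; your closing remark on removing the coprimality hypothesis is not needed for this statement, and the paper's version of that idea is to restrict to $b=tb'$, i.e.\ to use $F(tx)$.
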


In particular, Theorem \ref{gr} applies when $F(x)=x^\ell$, but does not apply when $F(x) = x^{\ell-p+1}(x-1)\cdots(x-p+1)$. The condition on the number of roots of $F$ is not quite satisfactory, and is conceivably an artifact of the proof. This condition is not needed qualitatively (see \cite[Section 6]{llw}).
Recently, Li and Sauermann \cite{li} showed that the root condition on $F$ can indeed be removed.

\begin{theorem}[Li-Sauermann] \label{th:ls}
Theorem \ref{gr} remains valid without the condition on the number of roots of $F$, at the cost of adjusting $2 q^{cn}$ to $c' q^{cn}$, for some constant $c' = c'(q, \ell)>0$.
\end{theorem}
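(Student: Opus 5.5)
Following Green's polynomial-method argument, the plan is to remove the root condition by a simple observation: one only needs to count solutions modulo $p$ after restricting $b$ to a suitable set. First, reduce to a finite-field linear algebra problem. Fix $m$ with $\deg F(b)<n$ whenever $\deg b<m$; concretely, $m\approx n/\ell$. Consider the set $S=\{F(b): b\in G_m\}\subset G_n$. If $A\subset G_n$ has no two distinct elements differing by an element of $S$, then $A$ is ``$S$-difference free''. We identify $G_n\cong\Fq^n$, and $F(b)$ becomes a polynomial map of degree at most $\ell$ in the coordinates of $b\in\Fq^m$.

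Second, run Green's slice-rank / Croot--Lev--Pach argument. For $a\in G_n$ define
\[
\Phi(a)=\sum_{b\in G_m} \mathbbm{1}[a=F(b)] = \sum_{b\in G_m}\prod_{i=1}^n \bigl(1-(a_i-F(b)_i)^{q-1}\bigr),
\]
with values in $\Fq$. Then $\Phi(a_1-a_2)$, viewed as a function on $A\times A$, vanishes off the diagonal, since for $a_1\ne a_2$ we have $a_1-a_2\notin S$. Its diagonal value is $\Phi(0)=\#\{b\in G_m: F(b)=0\}\bmod p$. Expanding the product and summing over $b$, any monomial in $b$ whose degree in some variable is not divisible by $q-1$, or is below $q-1$, dies. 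Hence $\Phi$ has low degree in $a$: $\deg\Phi\le n(q-1)-(q-1)m/\ell'$ roughly. The standard slice-rank bound then gives $|A|\le 2\#\{\text{monomials of degree}\le \deg\Phi/2\}\le 2q^{cn}$, provided the diagonal is nonzero. The diagonal value is the number of roots $b\in G_m$ of $F(b)=0$, which is exactly Green's root condition.

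Third, the observation that removes the condition is to replace $G_m$ by a coset-type set $T$ so that $\#\{b\in T:F(b)=0\}\equiv 1\pmod p$, while keeping the degree saving. Since $F$ has constant term $0$ and $\Fq[t]$ is a domain, the roots of $F$ in $\Fq[t]$ are finitely many, and all are constants in $\Fq$ when they lie in $G_m$. Take $T=\{b\in G_m: b(0)=0\}$, or more generally $b\in t\,G_{m-1}$. Then the only root of $F$ in $T$ is $b=0$, since every root lies in $\Fq$. So the diagonal equals $1\neq 0$. Summing over $T$ instead of $G_m$ loses just one free coordinate, which only affects the constant $c$. The diagonal count is then fine, and $F(T)\subset S$, so difference-freeness is inherited.

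The main obstacle is the degree computation: verifying that $\sum_{b\in T}$ of $\prod_i(1-(a_i-F(b)_i)^{q-1})$ really kills enough degree in $a$, i.e. that each coordinate $F(b)_i$ has degree at most $\ell$ in $b$, and tracking the exponent count. Then the bound $\#\{\text{monomials}\}\le q^{cn}$ with $c<1$ comes from the standard large-deviation estimate. I would mirror Green's computation essentially verbatim, with $m-1$ free variables in place of $m$.
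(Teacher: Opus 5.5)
Your proposal is correct and is essentially the paper's argument: restricting $b$ to $T=tG_{m-1}$ inside Green's argument is the same as running Green's proof for $F(tx)$, whose only root in $\Fq[t]$ is $0$. This is exactly the paper's observation; the paper uses $F(t^m x)$ with large $m$ in order to also cover $F\in\Fq[t][x]$, but $m=1$ suffices for $F\in\Fq[x]$, as in its own example. One small correction: losing one free coordinate changes the bound only by a bounded factor, so it is absorbed into $c'$ rather than into $c$ as you wrote.
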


Li and Sauermann proved Theorem \ref{th:ls} by generalizing a key lemma of Green (Lemma \ref{greenlemma} below). Alternatively, Theorem \ref{th:ls} follows from the following  observation. Green only considered $F \in \Fq[x]$, but an inspection shows that the same proof works for $F\in \Fq[t][x]$, at the cost of adjusting $2q^{cn}$ to $c'q^{cn}$. (Note that when $F \in \Fq[x]$, any root in $\Fq[t]$ of $F$ must necessarily be in $\Fq$.)
Now, suppose 
\[
F(x) = \alpha_1(t) x + \alpha_2(t) x^2 + \cdots + \alpha_\ell(t) x^\ell \in \Fq[t][x]
\]
is an arbitrary polynomial with constant term zero (and no assumption on the number of roots). Then 
\[
F(t^m x) = \alpha_1(t) t^m x + \alpha_2(t) t^{2m} x^2 + \cdots + \alpha_\ell(t) t^{\ell m} x^\ell.
\]
If $m$ is greater than $\deg_t \alpha_i(t)$ for all $i$, then the degrees of $t$ in the non-zero terms $\alpha_i(t) t^{im} x^i$ are all distinct. Therefore, the only root in $\Fq[t]$ of $F(t^m x)$ is $x = 0$. Applying Theorem \ref{gr} with $F(t^m x)$ in place of $F$, we see that any set $A \subset G_n$ with cardinality greater than $c' q^{cn}$, must contain distinct elements $a_1(t), a_2(t)$ such that $a_1(t)-a_2(t) = F(t^m b(t))$ for some $b(t)\in \Fq[t]$. Hence, Theorem \ref{th:ls} follows.

For example, if $F(x) = x^{\ell-p+1}(x-1)\cdots(x-p+1)$, then $F(tx) = t^{\ell-p+1} x^{\ell-p+1}(tx-1)\cdots(tx-p+1)$ has only one root $x=0$ in $\Fq[t]$.

In this paper, we generalize Theorems \ref{gr} and \ref{th:ls} to equations in more variables.

\begin{theorem}\label{thmA}
	Let $r, k, d$ and $\ell$ be integers satisfying $k>2r^2 (1-1/\ell^2)$. Suppose $c_1,\dots , c_k\in \Fq[t]$ have degree at most $d$ and satisfy $\sum_{i=1}^{k}c_i=0$. Let $F(x) \in \Fq[t][x]$ be a polynomial of degree $\ell$ with constant term zero. 
 Then there exist constants $c = c(q,r, k ,\ell) \in (0,1)$ and $c'=c'(q,r,k,d,F)>0$ such that the following holds: Any $A\subset G_n$ satisfying $|A|> c' \cdot q^{cn} $ must contain a nontrivial solution to 
	\begin{equation}\label{ee}
		c_1 a_1^r+\dots +c_k a_k^r=F(b)
	\end{equation}
	for some $b\in \Fq[t]$.
\end{theorem}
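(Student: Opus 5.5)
We follow Green's polynomial-method argument, replacing the single difference $a_1-a_2$ by the form $c_1a_1^r+\dots+c_ka_k^r$. The first step is a reduction. The observation after Theorem~\ref{th:ls} shows that replacing $F(x)$ by $F(t^mx)$ for $m$ large (depending on $F$) makes $x=0$ the only root of $F$ in $\Fq[t]$, so we may assume this. We then pass to a large subspace. Fix a parameter $s$ and consider $b$ ranging over $G_s$. The values $F(b)$ lie in $G_{\ell s + D}$ with $D=\deg_t F$, and they span a subspace that we will compare with the structure of $A$. Partitioning $G_n$ into cosets of $G_{\ell s+D}$ (the top coefficients fixing the coset) and using pigeonhole, we restrict to a coset where $A$ is dense. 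The translation must be compatible with the nonlinear form. Because $\sum c_i=0$, a common shift $a_i\mapsto a_i+h$ changes $\sum c_i a_i^r$ by lower-order terms involving $\sum c_i a_i^{j}h^{r-j}$, which are not zero. So we use a dilation/lifting trick instead: write $a_i = u + t^{N}v_i$, or exploit $\sum c_i=0$ only at the linear level. I would first try to linearize via the Frobenius when $r$ is a power of $p$ and otherwise treat $(a_1,\dots,a_k)$ with all $a_i$ in a common coset $u+G_m$.

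The core step is a slice-rank/Croot--Lev--Pach bound. For a set $A\subset G_n$ with no nontrivial solution, consider the function on $A^k$
\[
T(a_1,\dots,a_k)=\sum_{b\in G_s}\mathbf 1\Bigl[\textstyle\sum_i c_ia_i^r = F(b)\Bigr].
\]
We encode $\mathbf 1[\cdot=F(b)]$ coefficientwise as $\prod_j\bigl(1-(X_j)^{q-1}\bigr)$ over the $\Fq$-coordinates. Here $X_j$ are the coordinates of $\sum c_ia_i^r-F(b)$, which are polynomials of degree at most $r$ in the coordinates of the $a_i$. Summing over $b$ produces, as in Green's key lemma (Lemma~\ref{greenlemma}), a polynomial in the coordinates of $\sum c_ia_i^r$ of controlled degree. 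Its value at $0$ equals the number of roots of $F$ in $G_s$ modulo $p$, which is $1$ after the reduction, hence nonzero. On the diagonal $a_1=\dots=a_k$ the form vanishes because $\sum c_i=0$, so $T$ is nonzero on the diagonal. Absence of nontrivial solutions forces $T$ to vanish off the diagonal (trivial solutions must be handled, for instance by insisting that $a_i$ be distinct or by the usual reduction to a diagonal tensor). Therefore the slice rank of $T$ is at least $|A|$.

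For the upper bound we count monomials. $T$ is a polynomial of total degree at most $r(q-1)\cdot(\text{number of output coordinates})$ in $kn$ variables, each variable appearing with degree at most $q-1$. Splitting every monomial by the variable block $a_i$ of smallest degree gives slice rank at most $k\cdot\#\{\text{monomials in } n \text{ variables of degree}\le \deg T/k\}$. The point of the hypothesis $k>2r^2(1-1/\ell^2)$ is that the number of output coordinates is about $n$ minus the dimension $\approx \ell s$ of the space absorbed by $F(b)$. Choosing $s$ proportional to $n$ (roughly $\ell s\approx n(1-1/\ell)$, giving Green's degree reduction factor) makes $\deg T/k$ strictly less than $(q-1)n/2$. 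A standard large-deviation estimate then bounds the monomial count by $q^{cn}$ with $c<1$, and constants depending on $d,F$ enter only via $D$ and $\deg c_i$, which gives $c'$.

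The main obstacle is the degree bookkeeping. We must show that, after summing over $b\in G_s$, the indicator really depends only on roughly $n-\ell s$ coordinates, each raised to power $\le q-1$ and each of degree $\le r$ in the variables. We also need the numerology to yield exactly the threshold $2r^2(1-1/\ell^2)$ with strictly less than half the maximal degree. I would try first to mimic Green's lemma verbatim, with the linear map replaced by the degree-$r$ map $(a_i)\mapsto\sum c_ia_i^r$, and check the exponent count.
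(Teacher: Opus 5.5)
Your core step is the paper's proof. Write $\Psi$ for the coefficient map of $\sum_i c_ia_i^r$ and $\Phi=F|_{G_s}$. Then your $T$ is exactly $P\circ\Psi$, where $P(y)=\sum_{b\in G_s}\prod_j(1-(y_j-\Phi_j(b))^{q-1})$ is the polynomial from Green's lemma (Lemma~\ref{greenlemma}). After the $F(t^Mx)$ reduction, $T$ equals $1$ on the diagonal and $0$ off it. No extra treatment of trivial solutions is needed, since a tuple whose $a_i$ are not all equal is a nontrivial solution for every $b$. Tao's lemma and the monomial count then finish.

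Two parts of the proposal need changing. First, delete the coset/pigeonhole step. $P$ is defined on the whole target space $\Fq^{n'}$ of $\Psi$, so $A\subset G_n$ can be used as it stands. The incompatibility of translations with $\sum_i c_ia_i^r$ that you noticed means that step could not be made to work anyway.

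The genuine gap is the degree count, which is the only place the hypothesis $k>2r^2(1-1/\ell^2)$ is used, and your heuristic for it is wrong.
\begin{itemize}
\item The number of coordinates of $\sum_i c_ia_i^r$ is $n'=r(n-1)+d+1\approx rn$, not about $n$.
\item Summing over $b$ does not remove ``$\ell s$ coordinates''. Green's lemma gives $\deg P\le (q-1)(n'-s/\ell)$, a saving of (dimension of the domain of $\Phi$)/(degree of $\Phi$).
\item You should take $s$ as large as $F(G_s)\subseteq G_{n'}$ permits, i.e.\ $s=n'/\ell+O(1)\approx rn/\ell$. Your choice $\ell s\approx n(1-1/\ell)$ gives a much smaller saving and does not yield the stated threshold. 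With no saving at all you would only get $k>2r^2$, as in Theorem~\ref{bi}.
\end{itemize}
With the right choice, $\deg T\le r\deg P\le (q-1)r^2(1-1/\ell^2)n+O(1)$. Hence $\deg T/k\le En+O(1)$ with $E=(q-1)r^2(1-1/\ell^2)/k$, and $E<(q-1)/2$ is exactly the condition $k>2r^2(1-1/\ell^2)$. The monomial count is then at most $\inf_{0<x<1}(1+x+\dots+x^{q-1})^n x^{-En-O(1)}$. This is $\ll q^{cn}$ with $c<1$, because $(1+\dots+x^{q-1})x^{-E}$ has derivative $q((q-1)/2-E)>0$ at $x=1$, so it dips below $q$ just to the left of $1$.

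With these corrections your plan coincides with the paper's argument.
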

By a trivial solution we mean $(a_1,\dots,a_k)=(a,\dots,a)$ for some $a\in A$ and $b=0$. Theorems \ref{gr} and \ref{th:ls} are special cases of Theorem \ref{thmA} when $r=1$ and $k=2$.

Theorem \ref{thmA} was inspired by the following result \cite{bienvenu} of the first author of the present paper.

\begin{theorem}[Bienvenu] \label{bi}
	Let, $r, k$ and $d$ be integers satisfying $k\geq 2r^2+1$. Suppose $c_1, \dots, c_k\in \Fq[t]$ have degree at most $d$ and $\sum_{i=1}^k c_i=0$. Then there exist constants $c(r,q)\in(0,1)$ and $c'= c'(q,r,k,d) >0$ such that the following holds: Any $A\subset G_n$ with $|A|\geq c' \cdot q^{cn}$ must contain a non-trivial solution to 
	\begin{equation}\label{ee2}
		c_1 a_1^r+\dots +c_k a_k^r=0.
	\end{equation}
\end{theorem}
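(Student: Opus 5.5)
The plan is to use the Croot--Lev--Pach polynomial method in Tao's slice-rank formulation, applied to the system of $\Fq$-equations obtained by writing polynomials in $G_n$ through their coefficient vectors. The point is that $a\mapsto a^r$ is a polynomial map of degree $r$ in the coefficients, and the hypothesis $k>2r^2$ is exactly what makes the degree count work.

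\textbf{Linearization.} Identify $G_n$ with $\Fq^n$ via $a=\sum_{j<n}x_jt^j \leftrightarrow (x_0,\dots,x_{n-1})$. For $a_1,\dots,a_k\in G_n$, the polynomial $\sum_i c_ia_i^r$ has degree at most $r(n-1)+d$. Hence its vanishing amounts to $m:=r(n-1)+d+1$ equations $P_0=\dots=P_{m-1}=0$. Each $P_j$ is a polynomial over $\Fq$ of total degree $r$ in the $kn$ variables $x^{(1)},\dots,x^{(k)}$, and it is a sum of terms each depending on only one block $x^{(i)}$. Consider the function on $(G_n)^k$
\[
T(x^{(1)},\dots,x^{(k)})=\prod_{j=0}^{m-1}\bigl(1-P_j^{\,q-1}\bigr),
\]
which is the indicator of the solution set of \eqref{ee2}. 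After reducing each exponent modulo $x^q=x$, it is a polynomial of degree at most $(q-1)rm$ in which every variable has exponent at most $q-1$.

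\textbf{Diagonal tensor.} Suppose $A$ has no nontrivial solution. Then the restriction of $T$ to $A^k$ is supported on the diagonal. Moreover, for $a\in A$ the diagonal entry is $T(a,\dots,a)=1$, because $\sum c_i a^r=0$ when $\sum c_i=0$. By Tao's lemma, the slice rank of $T|_{A^k}$ is therefore $|A|$.

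\textbf{Bounding the slice rank.} Expand $T$ into monomials. In each monomial, some block $x^{(i)}$ has degree at most $(q-1)rm/k$. Group the monomials according to the first such block $i$. This writes $T$ as a sum of at most $k\cdot M$ slices, where $M$ is the number of monomials in $n$ variables with exponents in $[0,q-1]$ and total degree at most $(q-1)rm/k$. Hence $|A|\le kM$.

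\textbf{Counting monomials (the main step).} Since $m\le rn+d$, the degree threshold is at most $(q-1)n\cdot r^2/k+O_{q,r,d}(1)$. The condition $k\ge 2r^2+1$ makes this at most $(q-1)n\,\theta$ plus a constant, where $\theta:=r^2/k<1/2$. A uniformly random exponent vector in $[0,q-1]^n$ has mean total degree $(q-1)n/2$. A Chernoff-type large-deviation bound, as in Ellenberg--Gijswijt, then gives
\[
\#\{\text{exponent vectors with total degree}\le(q-1)\theta n + C\}\le c'q^{cn}
\]
for some $c=c(q,r)<1$, since $\theta\le r^2/(2r^2+1)$ is bounded away from $1/2$ in terms of $r$ alone. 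The bounded shift coming from $d$ only affects the constant $c'=c'(q,r,k,d)$.

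I expect this counting step to be the only place needing care. One has to verify that $c$ depends only on $q$ and $r$, which holds because $\theta$ is bounded away from $1/2$ uniformly in $k\ge 2r^2+1$, and that the additive shift $d$ costs only a multiplicative constant. Choosing $c'$ larger than $k$ times that constant then contradicts $|A|\ge c'q^{cn}$, which proves the theorem.
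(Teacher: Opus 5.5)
Your argument is correct and is essentially the paper's method: it is the proof of Proposition \ref{thmB} in Section \ref{sec:th} with the auxiliary map $\Phi$ trivial. In that case Green's polynomial from Lemma \ref{greenlemma} is replaced by the indicator of the origin $P(y)=\prod_{j}(1-y_j^{q-1})$, which has degree $(q-1)n'$, so your $T$ is exactly $P\circ\Psi$, and you then use the same pigeonhole slice decomposition and the same generating-function (Chernoff) count, with $k\ge 2r^2+1$ being precisely what pushes the threshold $(q-1)rn'/k$ below $(q-1)n/2$ by a margin depending only on $q$ and $r$.
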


Again, by a trivial solution we mean $(a_1,\dots,a_k)=(a,\dots,a)$ for some $a\in A$. 

Note that Theorem \ref{bi} provides a stronger conclusion than Theorem \ref{thmA}, but the number of variables required in Theorem \ref{bi} is greater than the number of variables required in Theorem \ref{thmA}. Therefore, Theorems \ref{thmA} and \ref{bi} do not subsume each other.

The organization of the paper is as follows. In Section \ref{sec:prop}, we deduce Theorem \ref{thmA} from a general statement (Proposition \ref{thmB}). In Section \ref{sec:th}, we prove Proposition \ref{thmB} using the Croot-Lev-Pach polynomial method.

\section{Proof of Theorem \ref{thmA}} \label{sec:prop}

Theorem \ref{thmA} is a consequence of the following proposition, which is a hybrid of \cite[Proposition 2]{bienvenu} and \cite[Theorem 1.3]{green}.
We say a map $\Psi: \mathbb{F}_q^n \rightarrow \mathbb{F}_q^m$ is a polynomial map of degree at most $d$ if $\Psi(x_1, x_2, \ldots, x_{n}) = (\psi_1(x_1, x_2, \ldots, x_{n}), \ldots, \psi_{m}(x_1, x_2, \ldots, x_{n}))$, where each $\psi_i(x_1, \ldots, x_n)$ is a polynomial of degree at most $d$.

\begin{proposition}\label{thmB}
Let $m,n,n',d'$ and $d''$ be positive integers. Let $\Psi: (\mathbb{F}_q^n)^k \rightarrow \mathbb{F}_q^{n'}$ be a polynomial map with degree at most $d'$ and $\Phi:\mathbb{F}_q^m \rightarrow \mathbb{F}_q^{n'}$ be a polynomial map with degree at most $d''$.
Let $A \subset \F_q^n$. Assume that
    \begin{enumerate}[label=(\roman*)]
        \item $\Psi^{-1}(0)\cap A^k = \{ (a, ..., a) : a \in A \}$,
        \item $|\Phi^{-1}(0)|$ is coprime to $q$, and
        \item $\Psi(A^k) \cap \Ima(\Phi) = \{0\}$.
    \end{enumerate}
    Then,
	\begin{equation} \label{eq:prop}
		|A|\leq k \cdot \inf_{0<x<1}\frac{(1+x+\dots+x^{q-1})^n}{x^{\left(\frac{q-1}{k}\right)\left(n'-\frac{m}{d''}\right)d' }}.
	\end{equation}
\end{proposition}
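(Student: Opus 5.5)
We combine the Croot--Lev--Pach slice-rank method, as in Bienvenu's \cite[Proposition 2]{bienvenu}, with Green's device \cite{green} for turning the condition ``$\Psi(a)\in\Ima(\Phi)$'' into a polynomial identity. The bound will come from comparing a diagonal function's slice rank with the usual monomial count.

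\textbf{Step 1: an indicator of the image of $\Phi$.} For $y\in\F_q^{n'}$ set
\[
Q(y)=\sum_{z\in\F_q^m}\prod_{j=1}^{n'}\bigl(1-(y_j-\phi_j(z))^{q-1}\bigr),
\]
which counts the $z$ with $\Phi(z)=y$. Then $Q(0)=|\Phi^{-1}(0)|\neq 0$ in $\F_q$ by (ii), while $Q(y)=0$ for every $y\notin\Ima(\Phi)$. The plan is to reduce $Q$ to a polynomial in $y$ of degree at most $(q-1)(n'-m/d'')$. Expanding the product, a monomial $y^\beta$ multiplies a polynomial $P_\beta(z)$ of degree at most $d''((q-1)n'-|\beta|)$. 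Its sum over $z\in\F_q^m$ vanishes unless, after reduction, some monomial has every exponent of each $z_i$ equal to $q-1$; that requires $\deg P_\beta\ge (q-1)m$. Green's lemma (Lemma \ref{greenlemma}) is exactly this computation, and it gives $\deg Q\le (q-1)n'-(q-1)m/d''=(q-1)(n'-m/d'')$.

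\textbf{Step 2: the diagonal function.} Define $T:A^k\to\F_q$ by $T(a_1,\dots,a_k)=Q(\Psi(a_1,\dots,a_k))$. By (iii), $T(a)=0$ whenever $\Psi(a)\neq 0$. By (i), $\Psi(a)=0$ on $A^k$ only on the diagonal, and there $T(a)=Q(0)\neq 0$. So $T$ is a nonzero multiple of the diagonal tensor, which has slice rank $|A|$ by Tao's lemma.

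\textbf{Step 3: the upper bound.} Composing with $\Psi$, $T$ becomes a polynomial in the $kn$ coordinates. Reduce each variable to exponent at most $q-1$; its degree is then at most $D=(q-1)(n'-m/d'')d'$. In each monomial some block $a_i$ carries degree at most $D/k$, so
\[
\operatorname{slicerank}(T)\le k\cdot\#\{\text{monomials in } n \text{ variables, exponents }\le q-1,\ \deg\le D/k\}.
\]
The standard Chernoff/generating-function bound estimates this count by $\inf_{0<x<1}(1+\dots+x^{q-1})^n/x^{D/k}$, which is \eqref{eq:prop}.

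The main obstacle is Step 1, namely getting the exact degree saving $m/d''$ per coordinate. I would take this from Green's lemma rather than redo it. Steps 2 and 3 are routine once that degree bound is in hand.
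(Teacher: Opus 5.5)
Your proposal is correct and follows the paper's proof essentially step for step. The paper likewise takes the polynomial from Green's lemma (Lemma \ref{greenlemma}): it is supported on $\Ima(\Phi)$, nonzero at $0$, and has degree at most $(q-1)(n'-m/d'')$. Composing it with $\Psi$ gives a function on $A^k$ supported exactly on the diagonal, and the paper compares Tao's value $|A|$ with the pigeonhole/monomial-count bound $kN \le k\,(1+x+\dots+x^{q-1})^n x^{-D/k}$. The only difference is that you sketch the construction behind Green's lemma (your $Q$, which counts preimages modulo $p$) instead of citing it, and that sketch is accurate.
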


\begin{proof}[Proof of Theorem \ref{thmA} assuming Proposition \ref{thmB}]
By the observation made in the introduction, we can assume that $F(x)$ has only one root $x=0$ in $\Fq[t]$, by replacing $F(x)$ with $F(t^Mx)$ for some sufficiently large $M$.

We identify $G_n$ with $\F_q^n$, by identifying each polynomial $g=g_0+g_1t+\dots+ g_{n-1}t^{n-1} \in G_n$ with the vector $(g_0,g_1, \dots , g_{n-1})\in \mathbb{F}_q^n$. Then $g^r \in G_{(n-1)r+1}$ corresponds to the vector $(g_0^r, rg_0^{r-1}g_1,\ldots,g_{n-1}^r) \in \mathbb{F}_q^{r(n-1)+1}$. Note that each component of $g^r$ is a polynomial of degree at most $r$ in $g_0,g_1, \dots , g_{n-1}$. 
	
Under this identification, the map 
$
	(a_1,\dots, a_k) \mapsto \sum ^k_{i=1} c_i a_i^r
$
induces a polynomial map 
\begin{align*}
\Psi: (\mathbb{F}_q^n)^k & \rightarrow \mathbb{F}_q^{n'} \\  
	(a_1,\dots, a_k) & \mapsto (\psi_1 (a_1,\dots, a_k), \ldots, \psi_{n'} (a_1,\dots, a_k) )
\end{align*}	
where $n':=(n-1)r+d+1$ and $\psi_i (a_1,\dots, a_k)$ is the coefficient of $t^{i-1}$ in $\sum ^k_{i=1} c_i a_i^r$. Moreover, the degree of each $\psi_i$ is at most $d':=r$.
	
Suppose 
\[
F(x) = \alpha_1(t) x+ \alpha_2(t) x^2+\cdots +\alpha_{\ell}(t) x^\ell \in \Fq[t][x].
\]
Let $u$ be the maximum of $\deg_t \alpha_{i}(t)$ for $i=1, \ldots, \ell$ and $m = \lfloor {\frac{n'-u}{\ell}}\rfloor +1$. If $h= h_0+h_1t+\cdots+h_{m-1}t^{m-1} \in G_m$, then 
	\begin{align*}
		F(h_0+h_1t&+\dots+h_{m-1}t^{m-1})\\
		&= f_0(h_0,\dots, h_{m-1})+f_1(h_0,\dots, h_{m-1})t+\dots+f_{n'-1}(h_0,\dots, h_{m-1})t^{n'-1} \in G_{n'}
	\end{align*}
	where $f_i \in \Fq[x_1, \dots,x_m]$ are polynomials with degrees at most $\ell$ and $f_i(0)=0$, for $0 \leq i\leq n'-1$.
	
Consider the polynomial map 
\begin{align*}
\Phi:\mathbb{F}_q^m & \rightarrow \mathbb{F}_q^{n'}, \\
(h_0, \ldots, h_{m-1}) &\mapsto (f_0(h_0,\dots, h_{m-1}), \ldots, f_{n'-1}(h_0,\dots, h_{m-1})). 
\end{align*}
	
Then $\Phi$ has degree at most $d'':=\ell$.  Suppose now $A \subset G_n$ is a subset that does not contain a nontrivial solution to $c_1 a_1 ^r + \cdots + c_k a_k^r = F(b)$.
We apply Proposition \ref{thmB} to $\Psi, \Phi$ and the set $A$. The first and third condition are satisfied by the assumption on $A$. The second condition is satisfied by the assumption on the number of roots in $\Fq[t]$ of $F$. 

    
Substituting $n'= (n-1)r+d+1, d'=r, d''=\ell$ and $m = \lfloor {\frac{n'-u}{\ell}}\rfloor +1$, \eqref{eq:prop} implies that
\[
|A|\leq k \inf_{0<x<1}\frac{(1+x+\dots+x^{q-1})^n}{ x^{\frac{q-1}{k} \left( 1 - \frac{1}{\ell^2} \right) nr^2 + O_{q,r,k,d,F}(1) }}.
\]
Writing $E := (q-1)r^2(1-1/\ell^2)k^{-1}$, then we have $E <(q-1)/2$ since $k>2r^2(1-1/\ell^2)$.

Let $f(x) =(1+x+\dots+x^{q-1})x^{-E}$. Then $f'(1) = \sum_{i=0}^{q-1} (i-E) > 0$, so $f$ is increasing on some interval $(1-\epsilon,1)$. Therefore, $\inf_{0<x<1} f(x) < f(1) = q$. Let $x_0 \in (0,1)$ be such that $f(x_0) <q$, then
  \begin{equation*}
|A|  \leq k \cdot x_0^{O_{q,r,k,d,F}(1)} \cdot (f(x_0))^n \leq c' \cdot q^{cn} 
  \end{equation*}
for some constants $c = c(q,r,k,\ell) \in (0,1)$ and $c'(q,r,k,d,F) >0$, as desired.
\end{proof}

\section{Proof of Proposition \ref{thmB}} \label{sec:th}
The remainder of the paper is devoted to the proof of Proposition \ref{thmB}. First we recall some preliminaries. The first one is the notion of slice rank introduced by Tao \cite{tao} (see also \cite[Definition 6.5.2]{zhao}).
\begin{definition}[Slice rank of a function]\label{srank}
	Let $A$ be a finite set and $\F$ be a field. A function $f: A^k \rightarrow \F$ is said to have \textit{slice rank} $1$ if it can be written as  
	\begin{equation*}
		f(x_1,x_2, \dots,x_k )= g(x_j)h(x_1,\dots, x_{j-1},x_{j+1},\dots, x_k),
	\end{equation*}
	for some $j\in \{1, \ldots, k\}$ and some nonzero functions $g:A\rightarrow \F$ and $h:A^{k-1}\rightarrow\F$.
	
	The \textit{slice rank} of a function $F: A^k \rightarrow \F$ is the minimum integer $r$ such that $F$ can be written as a sum of $r$ slice rank $1$ functions.
\end{definition}

	

Tao \cite[Lemma 1]{tao} (see also \cite[Lemma 6.5.5]{zhao}) proved the following fact regarding the slice rank of a diagonal function.
\begin{lemma}[Tao] \label{srdiag}
Let $A$ be a finite set.
	Suppose a function $f:A^k\rightarrow \F$ satisfies $f(a_1,\dots, a_k)\neq 0$ \ifff $a_1=a_2=\dots=a_k$. Then $f$ has slice rank $|A|$. 
\end{lemma}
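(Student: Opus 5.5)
We prove the two inequalities separately: the upper bound by an explicit decomposition, and the lower bound by induction on $k$, following Tao's original argument.

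\emph{Upper bound.} For each $a\in A$ let $\delta_a:A\to\F$ be the indicator of $\{a\}$. By hypothesis $f$ vanishes off the diagonal, so
\[
f(x_1,\dots,x_k)=\sum_{a\in A} f(a,\dots,a)\,\delta_a(x_1)\,\prod_{j=2}^k \delta_a(x_j).
\]
Each summand has slice rank $1$ (take $j=1$ in Definition \ref{srank}), so the slice rank of $f$ is at most $|A|$.

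\emph{Lower bound, base case $k=2$.} A function $A^2\to\F$ is a matrix indexed by $A\times A$. A slice rank $1$ function $g(x_1)h(x_2)$ is a rank-one matrix, so the slice rank of $f$ is at least its matrix rank. Here $f$ is diagonal with nonzero diagonal entries, so its rank is $|A|$.

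\emph{Lower bound, inductive step.} Let $k\ge 3$ and assume the lemma for $k-1$ and all finite sets. Suppose for contradiction that $f=\sum_{j=1}^k\sum_{i\in I_j} g_{j,i}(x_j)\,h_{j,i}(x_{\hat j})$ with $\sum_j |I_j| <|A|$, where $x_{\hat j}$ denotes the variables other than $x_j$.

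1. Consider the space $V$ of functions $v:A\to\F$ with $\sum_{a} v(a)g_{k,i}(a)=0$ for all $i\in I_k$. These are $|I_k|$ linear conditions, so $\dim V\ge |A|-|I_k|$.

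2. We need a $v\in V$ whose support has size at least $\dim V$. Put a basis of $V$ in reduced row echelon form with respect to some ordering of $A$. Each pivot coordinate is then nonzero in exactly one basis vector, so the sum of the basis vectors is nonzero at all $\dim V$ pivot coordinates. This is the one step requiring care, and the echelon argument settles it.

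3. Multiply the decomposition by $v(x_k)$ and sum over $x_k\in A$. The left side becomes
\[
f'(x_1,\dots,x_{k-1})=\sum_a v(a) f(x_1,\dots,x_{k-1},a).
\]
On the right side:
\begin{itemize}
\item the $j=k$ terms become $\bigl(\sum_a v(a)g_{k,i}(a)\bigr)h_{k,i}(x_1,\dots,x_{k-1})=0$, since $v\in V$;
\item each term with $j<k$ becomes $g_{j,i}(x_j)$ times a function of the remaining $k-2$ variables, i.e.\ a slice rank $\le 1$ function of $(x_1,\dots,x_{k-1})$.
\end{itemize}
Hence $f'$ has slice rank at most $\sum_{j<k}|I_j|$.

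4. Let $A'$ be the support of $v$. For $x\in A'^{k-1}$ we have $f'(x)\neq 0$ iff $x_1=\dots=x_{k-1}=a$ with $v(a)f(a,\dots,a)\ne0$, i.e.\ iff $x$ lies on the diagonal of $A'^{k-1}$.

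5. Restricting a slice rank decomposition to $A'^{k-1}$ does not increase slice rank, since terms that become zero can be dropped. By the induction hypothesis applied to $f'|_{A'^{k-1}}$,
\[
|A|-|I_k|\le |A'|\le \sum_{j<k}|I_j|.
\]
This gives $|A|\le\sum_j|I_j|$, contradicting $\sum_j|I_j|<|A|$. So the slice rank of $f$ is at least $|A|$, and combined with the upper bound it equals $|A|$.
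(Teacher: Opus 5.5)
Your proof is correct, and it is exactly Tao's argument from the reference the paper cites for this lemma (the paper gives no proof of its own): the diagonal decomposition gives the upper bound, and the lower bound is by induction on $k$, contracting against a vector $v$ orthogonal to the $x_k$-slice functions whose support has size at least $|A|-|I_k|$ (found via reduced row echelon form), then restricting to that support. Starting the induction at $k=2$ is also the right choice, since the statement implicitly needs $k\ge 2$ (for $k=1$ every nonzero function has slice rank $1$).
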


Our next tool is the following key lemma by Green \cite[Lemma 3.1]{green}, which says that the image of a polynomial map can be realized as the support of a low degree polynomial.
\begin{lemma}[Green]\label{greenlemma}
	Let $\Phi:\mathbb{F}_q^m \rightarrow \mathbb{F}_q^{n}$ be a polynomial map with degree at most $d$ such that $|\Phi^{-1}(0)|$ is coprime to $q$. Then there exists a polynomial $P\in \Fq[x_1,x_2,\dots,x_{n}]$ whose degree in every single variable is at most $(q-1)$ such that 
	\begin{enumerate}[label=(\roman*)]
		\item  $\deg(P) \leq (q-1) \left(n-\frac{m}{d}\right)$,
		\item $P$ is supported on $\Ima(\Phi)$ (i.e., $P(x)=0$ if $x\notin \Ima(\Phi)$), and
		\item $P(0)\neq 0$.
	\end{enumerate}
	
\end{lemma}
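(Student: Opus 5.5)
The plan is to write down explicitly a polynomial that counts preimages of $\Phi$ modulo $p$, and then to bound its degree by exploiting cancellation in character-free power sums over $\F_q^m$. For $y\in\F_q^n$, define
\[
P(y)=\sum_{x\in\F_q^m}\prod_{j=1}^{n}\Bigl(1-\bigl(y_j-\phi_j(x)\bigr)^{q-1}\Bigr),
\]
where $\Phi=(\phi_1,\dots,\phi_n)$.

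\textbf{Support and value at $0$.} Since $1-z^{q-1}$ equals $1$ if $z=0$ and $0$ otherwise on $\F_q$, the product is the indicator of $\Phi(x)=y$. Hence $P(y)=|\Phi^{-1}(y)|\cdot 1_{\F_q}$. This gives two of the required properties at once.
\begin{enumerate}[label=(\roman*)]
\item $P(y)=0$ whenever $y\notin\Ima(\Phi)$, which is property (ii).
\item $P(0)=|\Phi^{-1}(0)|\bmod p\neq 0$, because $|\Phi^{-1}(0)|$ is coprime to $q$; this is property (iii).
\end{enumerate}
Each factor has degree at most $q-1$ in $y_j$ and involves no other $y$-variable. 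So as a polynomial in $y$, $P$ has degree at most $q-1$ in every single variable.

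\textbf{Degree bound.} This is the main step. Expand each factor $(y_j-\phi_j(x))^{q-1}$ binomially and multiply out. $P$ becomes a sum of terms $y^{b}\cdot\sum_{x}g_b(x)$, where $g_b$ is a polynomial in $x$. A factor contributing $y_j^{b_j}$ also contributes $\phi_j(x)^{q-1-b_j}$, of $x$-degree at most $d(q-1-b_j)$. Thus
\[
\deg_x g_b\le d\bigl((q-1)n-|b|\bigr),\qquad |b|=\textstyle\sum_j b_j .
\]
Now use the standard fact that $\sum_{x\in\F_q^m}x_1^{a_1}\cdots x_m^{a_m}=0$ unless every $a_i$ is a positive multiple of $q-1$. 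One may first reduce $g_b$ modulo $x_i^q-x_i$, which does not change its values and does not increase degree. It follows that $\sum_x g_b(x)=0$ unless $\deg_x g_b\ge m(q-1)$.

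Therefore a monomial $y^b$ can survive only if $d\bigl((q-1)n-|b|\bigr)\ge (q-1)m$, that is, only if
\[
|b|\le (q-1)\Bigl(n-\frac{m}{d}\Bigr).
\]
This is property (i).

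The only delicate points are the bookkeeping in this expansion and the justification of the power-sum vanishing criterion; both are routine. I expect no serious obstacle beyond stating these carefully.
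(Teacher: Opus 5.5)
Your proof is correct and is essentially Green's own argument for \cite[Lemma 3.1]{green}, which the paper cites rather than reproves: take $P$ to be the mod-$p$ preimage count $\sum_{x}\prod_j\bigl(1-(y_j-\phi_j(x))^{q-1}\bigr)$, then show coefficients of monomials of degree above $(q-1)(n-m/d)$ vanish using the Chevalley--Warning power-sum fact. The reduction modulo $x_i^q-x_i$ you mention is harmless but unnecessary, since any monomial in $x$ of total degree below $m(q-1)$ already has some exponent below $q-1$ and so sums to zero over $\F_q^m$.
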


We are now ready to prove Proposition \ref{thmB}.

\begin{proof}[Proof of Proposition \ref{thmB}]
	
Applying Lemma \ref{greenlemma} to the polynomial map $\Phi: \mathbb{F}_q^m \rightarrow \mathbb{F}_q^{n'}$, we find that there exists a polynomial $P\in \Fq[x_1,x_2,\dots,x_{n'}]$ whose degree in every single variable is at most $(q-1)$ such that 
	\begin{enumerate}[label=(\roman*)]
		\item  $\deg(P) \leq (q-1) \left(n'-\frac{m}{d''}\right)$,
		\item $P$ is supported on $\Ima(\Phi)$, and
		\item $P(0)\neq 0$.
	\end{enumerate}
We now consider the function	$f(a_1, \ldots , a_k) := P(\Psi(a_1, \ldots, a_k))$ from $A^k$ to $\Fq$.
	
\noindent	\textbf{Claim:} $f(a_1, \ldots , a_k) \neq 0 $ if and only if $a_1= \cdots = a_k = a$ for some $a \in A$.

Indeed, for any $a \in A $, $f(a, \ldots , a)= P(\Psi(a, \ldots, a)) = P(0) \neq 0$. Conversely, suppose the $a_i$'s are not all equal. By the assumption on $\Psi$, we have $\Psi(a_1, \ldots, a_k) \neq 0$. Since $\Psi(A^k)$ and $\Ima(\Phi)$ only intersect at $0$, we have that $\Psi(a_1, \ldots, a_k) \notin \Ima(\Phi)$. Therefore, $P(\Psi(a_1, \ldots, a_k)) = 0$ since $P$ is supported on $\Ima(\Phi)$, and the claim is proved.
	
By Lemma \ref{srdiag}, the slice rank of $f$ is $|A|$. We will now estimate the slice rank of $f$ in a different way. Note that $f$ is a polynomial of degree at most $\text{deg}P\cdot\text{deg}\Psi \leq (q-1)\left(n'-\frac{m}{d''}\right)d'=: D$. For each $i=1, \ldots, k$, let $a_i = (a_{i,1}, \ldots, a_{i,n})$. Then $f(a_1, \ldots, a_k)$ can be written as a sum of monomials of the form
	\begin{equation} \label{eq:slice}
		a_{1,1}^{\alpha_{1,1}}a_{1,2}^{\alpha_{1,2}}\cdots a_{1,n}^{\alpha_{1,n}}a_{2,1}^{\alpha_{2,1}} \cdots a_{2,n}^{\alpha_{2,n}}\cdots a_{k,1}^{\alpha_{k,1}} \cdots a_{k,n}^{\alpha_{k,n}}
	\end{equation}
	where $\sum_{i=1}^{k}\sum_{j=1}^{n}\alpha_{i,j}\leq  D$. Since $a^q=a$ for every $a \in \F_q$, we may further assume that $\alpha_{i,j}\leq q-1$ for every $1 \leq i \leq k$ and $1 \leq j \leq n$.
	
	By the pigeonhole principle, there exists $i_0 \in \{ 1, \ldots, k \}$ such that $\sum_{j=1}^{n}\alpha_{i_0,j}\leq  D/k$. 
	For fixed $i_0$ and $(\alpha_{1},\alpha_{2},\dots, \alpha_{n})$, the sum of all terms in $f$ of the form \eqref{eq:slice} with $(\alpha_{i_0, 1},\alpha_{i_0, 2},\dots, \alpha_{i_0, n})= (\alpha_{1},\alpha_{2},\dots, \alpha_{n})$, is a slice rank 1 function. Hence, the slice rank of $f$ is at most $kN$, where $N$ is the
number of tuples $(\alpha_{1},\alpha_{2},\dots, \alpha_{n})$ such that $\alpha_{j}\leq q-1  $ for all $j \in \{1, \ldots, n\}$ and $\sum_{j=1}^{n}\alpha_{j}\leq  D/k$. It remains to estimate $N$. We have
	\begin{equation*}
		N= \sum_{\substack{u_0,u_1,\dots,u_{q-1}\geq 0\\u_0+u_1+\dots+u_{q-1}=n \\ u_1+2u_2+\dots+ (q-1) u_{q-1} \leq D/k } } \frac{n!}{u_0!u_1! \dots u_{q-1}!}.
	\end{equation*}
	 
Note that	 $N$ equals the sum of all the coefficients of $x^{v}$ with $v \leq D/k$ in the expansion of $(1+x+\dots +x^{q-1})^n$. Let $x \in (0,1)$ be arbitrary. When $v \leq D/k$, we have $x^v \geq x^{D/k}$, so
\begin{equation*}
	 	N \cdot x^{D/k} \leq (1+x+\dots+x^{q-1})^n.
\end{equation*}
	 
	 Therefore, the slice rank of $f$ is
	 \begin{equation*}
	 	|A| \leq k N\leq k\cdot \inf_{0<x<1} \frac{(1+x+\dots+x^{q-1})^n}{x^{\left(\frac{q-1}{k}\right)\left(n'-\frac{m}{d''}\right)d'}},
	 \end{equation*}
as desired.	 
\end{proof}

\noindent \textbf{Acknowledgments.} 
The first author
was supported by the Austrian Science Fund (FWF) [10.55776/PAT4719224].
The second and third authors were supported by National Science Foundation Grant DMS-2246921. The second author is also supported by a travel gift from the Simons Foundation. We would like to thank the referees for suggestions which improved the presentation of the paper. This work was created entirely by the authors without the assistance of artificial intelligence tools.

\end{document}